\newtheorem{theorem}{Theorem}
\newtheorem{lemma}{Lemma}
\newtheorem{corollary}{Corollary}
\newcommand{\R}{{\mathbb{R}}}
\def\={\stackrel {\rm def}  {=}}
\def\di={\overset{\text{${\mathcal{D} }$}} =}
\title{Small ball probabilities, maximum density and rearrangements}
\author{T. Ju\v skevi\v cius$^{1}$, J. D. Lee$^{2}$}
\date{}
\begin{document}
\maketitle

\footnotetext[1] {University of Memphis, Memphis, TN, USA, email - tomas.juskevicius@gmail.com.}

\footnotetext[2] {University of Cambridge, Cambridge, UK, email -  j.d.lee@dpmms.cam.ac.uk, jdlee0@gmail.com.}

\begin{abstract}
We prove that the probability that a sum of independent random variables in $\R^d$ with bounded densities lies in a ball is maximized by taking uniform distributions on balls. This in turn generalizes a result by Rogozin on the maximum density of such sums on the line.
\end{abstract}

Let $\mu$ be the Lebesgue measure on $\R^d$, and let X be a random vector in $\R^d$. If $X$ has a density $p$, we define
$$M(X)=\text{ess}\sup p := \sup\{\epsilon : \mu(\{t : p(t) > \epsilon\}) > 0\}.$$ 
For random variables with distributions that are not absolutely continuous with respect to $\mu$ measure we set $M(X):=\infty$. We note that $\text{ess}\sup$ is invariant under changes to $p$ on sets of measure $0$. Hence we will take our density functions to be equivalence classes up to alterations on sets of measure $0$; that is, they are defined as elements of $L_{\infty}$.\\

The aim of this paper is to provide best possible upper bounds for the maximum density and small ball probabilities of sums of random vectors.\\

Our starting point is a result by Rogozin, who showed that in the case $d=1$ the worst case is provided by uniform distributions over intervals. To be more precise, it was proved in \cite{R} that for independent real random variables $X_1,\ldots,X_n$ with $M(X_i)\leq M_i$ we have 
$$M(X_1+\cdots+X_n)\leq M(U_1+\cdots+U_n),$$
where $U_k$ are independent and uniformly distributed in $[-\frac{1}{2M_i},\frac{1}{2M_i}]$.\\

We extend Rogozin's inequality to all dimensions. In fact, we prove a more general statement for small ball probabilities that immediately implies a generalisation of Rogozin's result.

\begin{theorem}\label{set-rearrange}
Let $X_1,\ldots,X_n$ be independent random vectors in $\R^d$ with $M(X_i)\leq K_i$. Consider a collection of independent random vectors $U_1,\ldots,U_n$ with densities equal to $K_i$ on a ball around the origin and $0$ elsewhere. Then for every measurable set $S$ we have 
\begin{equation}\label{set-rearrange-ineq}
\mathbb{P}\left(X_1+\cdots+X_n\in S\right)\leq \mathbb{P}\left(U_1+\cdots+U_n 
\in B\right),
\end{equation}
where $B$ is the centered ball such that $\mu(B)=\mu(S).$
\end{theorem}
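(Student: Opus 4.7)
I would proceed in two stages: a symmetric decreasing rearrangement, then an iterative replacement of the densities by the extremal uniforms.

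\emph{Stage 1 (symmetrization).} Let $p_i$ be the density of $X_i$ (otherwise $M(X_i)=\infty$ and the bound is trivial). By the Brascamp--Lieb--Luttinger rearrangement inequality applied to the $n+1$ non-negative functions $\mathbbm{1}_S,p_1,\ldots,p_n$,
$$\int\mathbbm{1}_S\,(p_1*\cdots*p_n)\;\leq\;\int\mathbbm{1}_S^{*}\,(p_1^{*}*\cdots*p_n^{*})\;=\;\int_B(p_1^{*}*\cdots*p_n^{*}),$$
where $f^{*}$ denotes symmetric decreasing rearrangement; each $p_i^{*}$ is radially non-increasing with $\|p_i^{*}\|_\infty\leq K_i$ and total mass $1$. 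Writing $q_i:=K_i\mathbbm{1}_{B(R_i)}$ with $v_dR_i^d=1/K_i$ (so that $q_i$ is a density with $\|q_i\|_\infty=K_i$), it suffices to prove $\int_B(p_1^{*}*\cdots*p_n^{*})\leq\int_B(q_1*\cdots*q_n)$.

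\emph{Stage 2 (iterative replacement).} The key auxiliary lemma is: if $p$ is a density on $\R^d$ with $\|p\|_\infty\leq K$ and $H\in L^1$ is radially symmetric non-increasing, then
$$\int pH\,dy\;\leq\;\int qH\,dy,\qquad q:=K\mathbbm{1}_{B(R)},\ v_dR^d=1/K.$$
Indeed, $p\mapsto\int pH$ is a continuous linear functional on the weakly* compact convex set $\mathcal{C}:=\{p:0\leq p\leq K,\ \int p=1\}$; its maximum is attained at an extreme point of $\mathcal{C}$, namely a function $K\mathbbm{1}_A$ with $\mu(A)=1/K$; and by a standard layer-cake/Hardy--Littlewood argument $\int_AH$ is maximized when $A$ is a super-level set of $H$, which for radially non-increasing $H$ is a centered ball of measure $1/K$, giving precisely $q$.

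With the lemma in hand, peel off factors one at a time. Write
$$\int_B(p_1^{*}*\cdots*p_n^{*})\;=\;\int p_1^{*}(y)\,H_1(y)\,dy,\qquad H_1:=\mathbbm{1}_B*p_2^{*}*\cdots*p_n^{*}.$$
Since $\mathbbm{1}_B$ and each $p_j^{*}$ is radially symmetric non-increasing, and convolutions preserve this property (a classical consequence of Riesz rearrangement), $H_1$ is radially non-increasing. The lemma gives $\int p_1^{*}H_1\leq\int q_1H_1=\int_B(q_1*p_2^{*}*\cdots*p_n^{*})$. Iterating---at the $k$th step peeling off $p_k^{*}$ against a convolution of $\mathbbm{1}_B$ with $q_1,\ldots,q_{k-1},p_{k+1}^{*},\ldots,p_n^{*}$, which remains radially non-increasing---after $n$ steps we obtain $\int_B(q_1*\cdots*q_n)$, as desired. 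The main obstacle is establishing the lemma together with the preservation of radial monotonicity under convolution; once these are in place the argument is essentially mechanical.
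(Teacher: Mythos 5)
Your argument is correct, but it runs the paper's two ingredients in the opposite order and leans on different auxiliary facts. The paper first exploits multilinearity of $\mathbb{P}(\sum X_i\in S)$ in the densities to pass to extreme points of $\mathcal{S}_{K_i}$ (Lemma~\ref{extreme-points}), i.e.\ to densities $K_i\mathbbm{1}_{A_i}$ with $\mu(A_i)=1/K_i$, and only then applies Brascamp--Lieb--Luttinger once (Theorem~\ref{spherical-symmetry}), using that the rearrangement of $K_i\mathbbm{1}_{A_i}$ is exactly the uniform density on a centered ball. You symmetrize first and then trade each $p_i^{*}$ for the uniform ball density by a bathtub-principle lemma, which additionally requires the (classical, and correct) fact that a convolution of radially symmetric non-increasing functions is again radially symmetric non-increasing. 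The paper's route is shorter once the extreme-point lemma is in hand and needs nothing about convolutions beyond BLL; yours is more hands-on, replacing one density at a time by an explicit one-variable optimization, and it has the advantage that the bathtub step admits a direct proof with no convexity machinery: with $q=K\mathbbm{1}_{B(R)}$ and $c$ the value of $H$ at radius $R$, one has $\int (p-q)H=\int_{B(R)}(p-K)H+\int_{B(R)^c}pH\le c\bigl(\textstyle\int p-\int q\bigr)=0$. You should prefer that direct argument to the one you sketch, since the set $\{\,0\le p\le K,\ \int p=1\,\}$ is not weak* compact in $L_\infty(\R^d)$ (mass can escape to infinity), so the assertion that the linear functional attains its maximum at an extreme point needs more care --- a subtlety that, in fairness, is also glossed over in the paper's own reduction to extremal $p_i$. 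Apart from that technical point, your proof is complete and genuinely different in structure.
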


\begin{corollary}
Under the same conditions as above we also have that
$$M(X_1+\cdots+X_n)\leq M(U_1+\cdots+U_n).$$
\end{corollary}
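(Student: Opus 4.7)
The plan is to deduce the corollary from Theorem~\ref{set-rearrange} via a variational description of the essential supremum. Concretely, whenever a random vector $Y$ in $\R^d$ has a density $q\in L_\infty$,
\[
M(Y)\;=\;\sup\left\{\frac{\Pro(Y\in S)}{\mu(S)}\;:\;S\text{ measurable},\;0<\mu(S)<\infty\right\}.
\]
The inequality ``$\leq$'' is immediate from $\Pro(Y\in S)=\int_S q\leq \|q\|_\infty\mu(S)$. For ``$\geq$'', for each $\varepsilon>0$ the super-level set $\{q>M(Y)-\varepsilon\}$ has positive measure, and moreover $\sigma$-finite measure since $\int q=1$, so one may choose a subset $S$ of positive finite measure and obtain $\Pro(Y\in S)/\mu(S)\geq M(Y)-\varepsilon$.

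Next I would verify that both $Y:=X_1+\cdots+X_n$ and $V:=U_1+\cdots+U_n$ have bounded densities, so that the characterization above applies on each side. Since each $X_i$ has a density $p_i$ with $\|p_i\|_\infty\leq K_i<\infty$, Young's convolution inequality yields $\|p_1*\cdots*p_n\|_\infty\leq\min_i K_i<\infty$, and the analogous bound holds for $V$. Given this, for every measurable $S$ with $0<\mu(S)<\infty$, Theorem~\ref{set-rearrange} applied with $B$ the centered ball of measure $\mu(S)$ gives
\[
\frac{\Pro(Y\in S)}{\mu(S)}\;\leq\;\frac{\Pro(V\in B)}{\mu(B)}\;\leq\;M(V),
\]
and taking the supremum over $S$ on the left yields $M(Y)\leq M(V)$.

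I do not foresee a genuine obstacle: granted Theorem~\ref{set-rearrange}, the corollary is essentially a one-line consequence of writing $M$ as a supremum of normalized hitting probabilities. The only bookkeeping point worth flagging is that the variational formula must be applied to sets of positive \emph{finite} Lebesgue measure, which is why in the lower bound for $M(Y)$ one restricts to a subset of the super-level set rather than using the whole set; integrability of $q$ guarantees such subsets exist.
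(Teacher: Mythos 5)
Your proof is correct and follows essentially the same route as the paper: both express the maximum density variationally as a (limit of a) supremum of $\Pro(Y\in S)/\mu(S)$ over measurable sets $S$ and then apply Theorem~\ref{set-rearrange} set by set. Your write-up is merely a more careful version of the paper's two-line argument (full supremum over sets of positive finite measure, plus the Young's-inequality check that the sums have bounded densities, which is harmless though not strictly needed).
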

\begin{proof}
Note that for any variable $X$ with density $p$
\[
M(X) = \lim_{\epsilon \rightarrow 0} \sup_{\mu(S) = \epsilon} \epsilon^{-1}\int_S p d\mu,
\]
and from Theorem~\ref{set-rearrange} for every fixed $\epsilon$ the right hand side is not decreased by taking the variables $U_i$ in place of $X_i$. Hence the corollary holds. 
\end{proof}

Even for $d=1$ our approach to Theorem~\ref{set-rearrange} is quite different than that of Rogozin, who used discretization arguments together with an idea of Erd\H{o}s to relate small ball probabilities to Sperner's theorem in finite set combinatorics. We avoid these subtleties by using a rearrangement inequality proved by Brascamp, Lieb and Luttinger.

Before stating this result, we define the spherically symmetric decreasing rearrangement. Given a non-negative function $f:\R^d\mapsto \R$ we first set $M_{y}^{f}=\left\{t: f(t)\geq y\right\}$. Suppose we are given an $f$ such that $M_{a}^f< \infty$ for some $a\in \R$. We define $\tilde{f}$ to be a function such that:\\
\begin{eqnarray*}
&&1)\,\, \tilde{f}(x)=\tilde{f}(y),\, \text{for} |x|_2=|y|_2;\\
&&2)\,\, f(x)\leq f(y)\,\,\text{for}\, x\leq y;\\
&&3)\,\, M_{y}^{\tilde{f}}=M_{y}^{f}.
\end{eqnarray*}
The function $\tilde{f}$ is known as the spherically symmetric decreasing rearrangement of $f$. For existence, uniqueness and other properties of $\tilde{f}$ we refer the reader to \cite{BLL} and \cite{Inequalities}.\\

Having introduced the relevant symmetrization we can state the aforementioned rearrangement result.

\begin{theorem}\label{BLL-rearrange}
Let $f_j$, $1\leq j \leq k$ be non-negative measurable functions on $\R^d$ and let $a_{j,m}$, $1\leq j \leq k$,$1\leq m \leq n$, be real numbers. Then
$$\int_{\R^{nd}}\prod_{j=1}^{k}\left(f_j\left(\sum_{m=1}^{n}a_{j,m}x_m\right)\right)d^{nd}\leq \int_{\R^{nd}}\prod_{j=1}^{k}\left(\tilde{f}_j\left(\sum_{m=1}^{n}a_{j,m}x_m\right)\right)d^{nd}$$

\end{theorem}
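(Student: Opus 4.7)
The plan is a three-step reduction. First, by the layer-cake representation $f_j(y)=\int_0^\infty \mathbf{1}_{\{f_j>t\}}(y)\,dt$ applied to each $f_j$ and Fubini, together with the identity $\{\tilde f_j>t\}=\{f_j>t\}^*$ (the centred open ball of equal Lebesgue measure), the inequality reduces to the special case where every $f_j$ is the indicator of a measurable set $A_j$ of finite measure, with rearrangement $\mathbf{1}_{A_j^*}$. Both sides of the asserted inequality become multiple integrals (over the $t_j$) of the corresponding indicator-function inequality, so it suffices to establish the latter.

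Second, when $d \ge 2$, I would use Steiner symmetrization to reduce to $d = 1$. Fix $e \in S^{d-1}$ and write $A^e$ for the Steiner symmetrization of $A \subset \R^d$ along $e$. Slicing $\R^{nd}$ by affine $n$-dimensional subspaces parallel to $\R e$ in each of the $n$ coordinate blocks and applying Fubini, the indicator inequality for the map $\{A_j\} \mapsto \{A_j^e\}$ reduces fibrewise to a one-dimensional statement of exactly the same form (with the same coefficients $a_{j,m}$ and with the slices of the $A_j$ playing the role of the sets). Iterating Steiner symmetrizations along a dense countable family of directions produces sets converging in $L^1$ (after first truncating each $A_j$ to a large ball, recovered in the end by monotone convergence) to the spherical rearrangement $A_j^*$, and the inequality passes to the limit by dominated convergence.

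Third, and this is the crux, the one-dimensional indicator case must be established directly: for measurable $A_1,\dots,A_k\subset\R$ of finite measure and real coefficients $a_{j,m}$,
\[
\int_{\R^n}\prod_{j=1}^k\mathbf{1}_{A_j}\!\Bigl(\sum_m a_{j,m}x_m\Bigr)\,dx \;\le\; \int_{\R^n}\prod_{j=1}^k\mathbf{1}_{A_j^*}\!\Bigl(\sum_m a_{j,m}x_m\Bigr)\,dx.
\]
My approach would be to approximate each $A_j$ by a finite disjoint union of bounded intervals and then proceed by induction on the total number of components; the inductive step would be a translation deformation sliding one component of some $A_j$ toward another while keeping all other sets fixed, and showing that the integral is monotone along the deformation until the two components merge, reducing the total count by one. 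The main obstacle is verifying this monotonicity: unlike the $k = 3$ Riesz rearrangement inequality there is no single involution on $\R^n$ that simultaneously implements the rearrangement of all the $A_j$, and the required statement is a genuinely multilinear generalisation of the one-dimensional Brunn--Minkowski inequality. I expect most of the technical effort to lie here, either via a polarization (two-point symmetrization) argument on $\R$ with careful tracking of the membership patterns $(\mathbf{1}_{A_j}(L_j(x)),\mathbf{1}_{A_j}(L_j(\sigma x)))$ across pairs of reflected points, or via the measure-preserving-map construction used in the original \cite{BLL} proof.
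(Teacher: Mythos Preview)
The paper does not prove Theorem~\ref{BLL-rearrange}: it is quoted from \cite{BLL} as a known result and applied as a black box in the proof of Theorem~\ref{spherical-symmetry}. There is therefore no paper-side argument to compare your proposal against.

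Assessed as a standalone proof, your first two reductions --- layer-cake to pass to indicators of finite-measure sets, and iterated Steiner symmetrization along a dense set of directions to reduce $\R^d$ to $\R$ --- are correct and standard, and in fact reproduce the skeleton of the original argument in \cite{BLL}. Your third step, however, is by your own description a plan rather than an argument: you rightly isolate the one-dimensional indicator inequality as the heart of the theorem, but the sliding-and-merging induction you propose rests on a monotonicity claim along the translation deformation that you do not establish, and the alternatives you list (two-point polarization tracking membership patterns across reflected pairs, or the measure-preserving-map construction of \cite{BLL}) are each substantial arguments in their own right that you have not carried out. So what you have written is an accurate roadmap to a proof rather than a completed one; the genuine gap is exactly where you say it is. For the purposes of the present paper, which merely invokes the inequality, none of this additional work is required.
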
 

A direct consequence of the latter result is the following.
\begin{theorem}\label{spherical-symmetry}
Let $X_1,\ldots,X_n$ be independent random variables with given density functions $p_{i}$. Consider another collection of independent random variables $X_1',\ldots, X_n'$ with density functions $\tilde{p}_i$. Then for every measurable set $S$ we have 
\begin{equation}
\mathbb{P}\left(X_1+\cdots+X_n\in S\right)\leq \mathbb{P}\left(X_1'+\cdots+X_n' 
\in B\right),
\end{equation}
where $B$ is the centered ball such that $\mu(B)=\mu(S).$
\end{theorem}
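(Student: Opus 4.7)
The plan is to express the probability as an integral over $\mathbb{R}^{nd}$ and then apply Theorem~\ref{BLL-rearrange} directly. Namely, since the $X_i$ are independent with densities $p_i$, we can write
\[
\mathbb{P}(X_1+\cdots+X_n \in S) = \int_{\mathbb{R}^{nd}} \mathbbm{1}_S(x_1+\cdots+x_n) \prod_{i=1}^{n} p_i(x_i)\, dx_1\cdots dx_n.
\]
This is exactly of the form required by Theorem~\ref{BLL-rearrange}, taking $k=n+1$ functions: the $n$ densities $p_i$ with coefficient vectors $a_{i,m}=\delta_{i,m}$, and one additional function $\mathbbm{1}_S$ with coefficients $a_{n+1,m}=1$ for all $m$, so that its argument is $x_1+\cdots+x_n$.

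Next I would identify the spherically symmetric decreasing rearrangements appearing on the right-hand side of the BLL inequality. The rearrangement of $p_i$ is by definition $\tilde p_i$. The rearrangement of $\mathbbm{1}_S$ is $\mathbbm{1}_B$, since $\mathbbm{1}_S$ only takes the values $0$ and $1$, and its unique nontrivial super-level set $\{t : \mathbbm{1}_S(t)\geq y\}=S$ (for $0<y\leq 1$) has measure $\mu(S)$, so the rearrangement must be the indicator of the centered ball $B$ with $\mu(B)=\mu(S)$.

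Combining these two observations, Theorem~\ref{BLL-rearrange} gives
\[
\int_{\mathbb{R}^{nd}} \mathbbm{1}_S(x_1+\cdots+x_n) \prod_{i=1}^n p_i(x_i)\, dx_1\cdots dx_n \leq \int_{\mathbb{R}^{nd}} \mathbbm{1}_B(x_1+\cdots+x_n) \prod_{i=1}^n \tilde p_i(x_i)\, dx_1\cdots dx_n,
\]
and the right-hand side is precisely $\mathbb{P}(X_1'+\cdots+X_n'\in B)$, which completes the proof.

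There is no real obstacle here beyond having the right tool at hand: the result is essentially a one-line corollary of Brascamp--Lieb--Luttinger once one recognizes the probability as a product integral over a linear system and remembers that the rearrangement of the indicator of a set is the indicator of the centered ball of equal measure. If anything deserves care, it is only checking that $\mathbbm{1}_S$ and each $p_i$ are non-negative and measurable so that Theorem~\ref{BLL-rearrange} is applicable, which is immediate.
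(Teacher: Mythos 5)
Your proof is correct and is essentially identical to the paper's own argument: both write the probability as a product integral over the linear forms $x_i$ and $x_1+\cdots+x_n$, apply Theorem~\ref{BLL-rearrange} with the $n+1$ functions $p_1,\ldots,p_n,\mathbbm{1}_S$, and use that $\widetilde{\mathbbm{1}_S}=\mathbbm{1}_B$ while $\tilde p_i$ are the densities of the $X_i'$. No differences worth noting.
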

\begin{proof}
We have that
\[
\mathbb{P}\left(\sum X_i \in S\right) = \int_{x_1, \ldots x_n}\prod_{i=1}^{n} p_i(x_i) \mathbbm{1}_S\left(\sum_i x_i\right) d^n\mu.
\]
Now apply Theorem~\ref{BLL-rearrange} with the $f_i$ taken to be $\{p_1, \ldots, p_n, \mathbbm{1}_S\}$ and the $a_{j,m} = 1$ when $j = m$ or $j = n+1$ and $a_{j,m} = 0$ otherwise. We note that $\widetilde{\mathbbm{1}_S} = \mathbbm{1}_B$ and that $\tilde{p_i}$ are the densities of $X'_i$, completing the proof.
\end{proof}

To obtain Theorem 1 we will first characterize the extreme points of the set of measures with bounded densities.
\begin{lemma}\label{extreme-points}
Let $\mathcal{S}_K$ be the set of probability measures in $\R^d$ that have essential suprema bounded by $K>0$. The extreme points of $\mathcal{S}_K$ are measures having densities $p(t)=K\mathbb{I}_{S}(t)$ for some set $S$ with $\mu(S)=1/K.$
\end{lemma}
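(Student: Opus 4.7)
My plan is to verify the two directions of the characterization separately, using the convexity structure of $\mathcal{S}_K$ and a simple bump perturbation argument.

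For the forward direction, I would take a density of the form $p = K \mathbb{I}_S$ with $\mu(S) = 1/K$ and suppose $p = \alpha p_1 + (1-\alpha) p_2$ for some $\alpha \in (0,1)$ and $p_1, p_2 \in \mathcal{S}_K$. Since $p_1, p_2 \geq 0$ and $p = 0$ on $S^c$, both $p_i$ must vanish almost everywhere on $S^c$. On $S$, we have $\alpha p_1 + (1-\alpha)p_2 = K$ almost everywhere, and since each $p_i \leq K$ almost everywhere, the only way to achieve the convex combination is to have $p_1 = p_2 = K$ almost everywhere on $S$. Hence $p_1 = p_2 = p$, proving extremality.

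For the converse, I would argue contrapositively: given $p \in \mathcal{S}_K$ not of the above form, construct a perturbation $h$ such that $p \pm h$ are distinct elements of $\mathcal{S}_K$ with midpoint $p$. The key observation is that if $p$ is not of the form $K\mathbb{I}_S$ with $\mu(S)=1/K$, then the set $A = \{t : 0 < p(t) < K\}$ must have positive Lebesgue measure (otherwise $p$ is a.e.\ $K$ times the indicator of $\{p = K\}$, whose measure must be $1/K$ by the total-mass constraint). Writing $A$ as the countable union of $A_n = \{t : 1/n \leq p(t) \leq K - 1/n\}$, some $A_n$ has positive measure. Split $A_n$ into two disjoint measurable subsets $A_n^+, A_n^-$ of equal, positive, finite measure, and set $h = (1/n)(\mathbb{I}_{A_n^+} - \mathbb{I}_{A_n^-})$.

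Then $\int h \, d\mu = 0$, so $p \pm h$ integrate to $1$; by construction $0 \leq p \pm h \leq K$ almost everywhere, so $p \pm h \in \mathcal{S}_K$; and $p = \tfrac{1}{2}((p+h) + (p-h))$ with $p + h \neq p - h$, so $p$ is not extreme. The main technical point is simply ensuring that the splitting of $A_n$ into two pieces of equal positive measure is possible, which follows from nonatomicity of Lebesgue measure on a set of positive measure; everything else is a direct computation.
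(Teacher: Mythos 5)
Your proof is correct and follows essentially the same route as the paper: extremality of $p=K\mathbb{I}_S$ via the pointwise squeeze $0\le p_i\le K$, and non-extremality of everything else via an explicit mean-zero perturbation supported where $p$ stays strictly between $0$ and $K$. The only difference is cosmetic: you add and subtract a constant bump $1/n$ on two equal-Lebesgue-measure halves of $\{1/n\le p\le K-1/n\}$, whereas the paper scales $p$ by $(1\pm\delta)$ on two equal-mass halves of the set where $0<p<y$ for a suitable level $y<K$; if anything, your choice of level set states the needed positive-measure condition a bit more cleanly.
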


\begin{proof}
Firstly, we note that all measures having densities $p=K\mathbb{I}_{S}$ are extremal. Suppose not. Then $p = \alpha p_{1} + (1-\alpha)p_2$, where $\alpha \in (0,1)$ and $p_1, p_2$ are not equal to $p$. But then $p_1$ and $p_2$ differ from $p$ on a set of positive measure, and so $\max(p_1, p_2) > K$ on some set of positive measure. Hence one of $p_1$, $p_2$ must exceed $K$ on a set of positive measure, so is outside of $\mathcal{S}_K$.

Suppose that the density of a measure is not one of these extremal examples. Consider the sets
\[
A_y=\left\{t: p(t)\geq y\right\}.
\]
Now, there is some $y\in(0,K)$ such that $\mu(A_y) > 0$, as otherwise $p(t) = K$ almost everywhere on its support, and so $p$ would be one of our extremal examples. We fix any such $y$, and define $X = \sup(p) \backslash A_y$. Furthermore, we partition $X$ into two disjoint sets $X_1, X_2$ such that $\int_{X_1} p d\mu = \int_{X_2} p d\mu$.

We fix $\delta \in (0,K/y-1)\cap (0,1)$, and construct two densities $p_1, p_2$ as follows:
\[
p_i(t) = \begin{dcases}p(t) & t \in A_y \\ (1-\delta)p(t) & t\in X_i \\ (1+\delta)p(t) & t \in X_{1-i}\end{dcases}
\]
First, we observe that $p = \frac{1}{2}(p_1 + p_2)$. Furthermore, each of $p_1, p_2$ are equal to $p$ on $A_y$, and are bounded pointwise on $X$ by:
\[
(1+\delta)\sup_{X}p \leq (1+\delta)y \leq K.
\]
Hence the essential suprema of $p_1, p_2$ are bounded by $K$, and so $p_1, p_2 \in \mathcal{S}_K$ as required. 
\end{proof}

We now prove Theorem~\ref{set-rearrange}:
\begin{proof}
We first observe that Equation~\ref{set-rearrange-ineq} can be written as a multilinear integral over the densities of $X_i$ and the indicator function of $S$. As a corollary, it is maximized when each $p_i$ is an extremal member of $\mathcal{S}_{K_i}$.

Hence from Lemma~\ref{extreme-points} each density $p_i$ is proportional to the indicator function of a set of measure $K_i^{-1}$. From Theorem~\ref{spherical-symmetry}, we have that to maximize this expression we may replace each of the densities $p_i$ by $\tilde{p_i}$ and replace $S$ by  a ball $B$ of the same volume.

We now observe that if $p_i$ is proportional to an indicator function, then $\tilde{p_i}$ is proportional to the indicator function of a ball centered on the origin, which completes the theorem.
\end{proof}


\bibliography{Densities}
\nocite{RV}
\nocite{R}
\nocite{BLL}
\nocite{BC}

\end{document}